\numberwithin{equation}{section}
\newtheorem{theorem}{Theorem}[section]
\newtheorem{corollary}[theorem]{Corollary}
\newtheorem{proposition}[theorem]{Proposition}
\numberwithin{equation}{section}
\let\c@theorem\c@equation
\renewcommand{\leq}{\leqslant}
\renewcommand{\geq}{\geqslant}
\newcommand{\nin}{\notin}
\newcommand{\set}[1]{\{#1\}}
\newcommand{\setst}[2]{\{\,#1\mid #2 \}}
\newcommand{\gen}[1]{\langle #1 \rangle}
\newcommand{\tr}{\operatorname{tr}}
\renewcommand{\c}[2]{{ }^{#1}\!{#2}}
\newcommand{\bs}{\backslash}
\newcommand{\F}{\mathcal{F}}
\newcommand{\C}{\mathcal{C}}
\renewcommand{\phi}{\varphi}
\newcommand{\Hom}{\operatorname{Hom}}
\newcommand{\Aut}{\operatorname{Aut}}
\newcommand{\Syl}{\operatorname{Syl}}
\newcommand{\hyp}{\mathfrak{hyp}}
\newcommand{\foc}{\mathfrak{foc}}
\renewcommand{\mod}{\operatorname{mod}}
\title[Thompson-Lyons transfer lemma]{The Thompson-Lyons transfer lemma for fusion systems}
\author[J.~Lynd]{Justin Lynd}
\begin{document}
\maketitle
\begin{abstract}
A generalization of the Thompson transfer lemma and its various
extensions, most recently due to Lyons, is proven in the context of saturated
fusion systems. A strengthening of Alperin's fusion theorem is also given in
this setting, following Alperin's own ``up and down'' fusion.
\end{abstract}

The classical Thompson transfer lemma appeared as Lemma~5.38 in
\cite{Thompson1968}; for a $2$-perfect group $G$ with $S \in \Syl_2(G)$, it
says that if $T$ is a maximal subgroup of $S$ and $u$ is an involution in
$S-T$, then the element $u$ has a $G$-conjugate in $T$.  Thompson's lemma has
since been generalized in a number of ways.  Harada showed
\cite[Lemma~16]{Harada1968} that the same conclusion holds provided one takes
$u$ to be of least order in $S-T$. Unpublished notes of Goldschmidt
\cite{GoldschmidtTransfer} extended this to show that one may find an
$G$-conjugate of $u$ in $T$ which is extremal under the same conditions.  An
element $t \in S$ is said to be extremal in $S$ with respect to $G$ if $C_S(t)$
is a Sylow subgroup of $C_G(t)$. In other words, $\gen{t}$ is fully
$\F_S(G)$-centralized, where $\F_S(G)$ is the fusion system of $G$.  Later,
Thompson's result and its extensions were generalized to all primes via an
argument of Lyons \cite[Proposition~15.15]{GLS2}. We prove here a common
generalization of Lyons' extension and his similar transfer result
\cite[Chapter 2, Lemma 3.1]{GLS6} (which relaxes the requirement that $S/T$ be
cyclic) in the context of saturated fusion systems.

\begin{TL*}
Suppose that $\F$ is a saturated fusion system on the $p$-group $S$, and that
$T$ is a proper normal subgroup of $S$ with $S/T$ abelian. Let $u \in S-T$ and
let $\mathcal{I}$ be the set of fully $\F$-centralized $\F$-conjugates of $u$
in $S-T$. Assume 
\begin{enumerate}[label=\textup{(\arabic{*})}]
\item $u$ is of least order in $S-T$ and
\item the set of cosets $\mathcal{I}T = \set{vT \mid v \in \mathcal{I}}$ is
linearly independent in $\Omega_1(S/T)$.
\end{enumerate}
If $u \in \foc(\F)$, then $u$ has a fully $\F$-centralized $\F$-conjugate in
$T$.
\end{TL*}

Since $\hyp(\F) \leq \foc(\F)$, an immediate consequence of the Theorem is that
the conclusion holds in any fusion system with $\F = O^p(\F)$.  We refer to
\cite{AschbacherKessarOliver2011} and \cite{CravenTheory} for background on
fusion systems. Of course, both conditions in Theorem~TL are necessary. Take
$p=2$, $\F = \F_S(G)$, $S \in \Syl_2(G)$, and for (1): $G = SL_3(2)$, $T$ a
four group, and $u \in S-T$ of order $4$; while for (2): $G = SL_2(3)$, $T =
Z(S)$ and $u \in S-T$. 

In the classification of finite simple groups, direct applications of the
transfer map are primarily those arising from extensions of Thompson's lemma,
as well as Yoshida's Theorem on control of transfer.  The latter has an
analogue in fusion systems \cite{DiazGlesserParkStancu2011}. To the extent that
history is a guide, Theorem~TL should serve as a key ingredient in the analysis
of simple fusion systems at the prime $2$, especially in the component-type
portion of a program for the classification of simple $2$-fusion systems
outlined recently by M. Aschbacher; see
\cite[Sections~I.13-15]{AschbacherKessarOliver2011}. Indeed, we postpone
applications of Theorem~TL to the companion \cite{LyndCharacterization}, where
we classify certain $2$-fusion systems with an involution centralizer having a
component based on a dihedral $2$-group.

\section{Bisets and notation}
The proof we present is modeled on Lyons' argument mentioned above, and relies
on the transfer in saturated fusion systems.  Transfer in the fusion system
setting is defined by way of a characteristic biset associated to a saturated
fusion system, which is an $S$-$S$ biset $\Omega$ satisfying certain properties
first outlined by Linckelmann and Webb \cite{LinckelmannWebb}.  We motivate
these properties now while fixing notation; more details can be found in
\cite[Section~7.6]{CravenTheory}.

We compose maps left-to-right, sometimes writing applications of injective
homomorphisms in the exponent, as in $s^\phi$ when $s \in S$ and $\phi \in
\Hom_\F(\gen{s}, S)$. For groups $H$ and $K$, an $H$-$K$ biset is a set
$X = { }_HX_K$ together with an action of $H$ on the left and $K$ on the right,
such that $(hx)k = h(xk)$ for each $x \in X$, $h \in H$, and $k \in K$. An
$H$-$K$ biset $X$ may be regarded as a right $(H \times K)$-set via $x \cdot
(h,k) = h^{-1}xk$. A transitive biset is a biset with a single orbit under the
action of $H \times K$. 

Let $G$ be a finite group and $S$ a Sylow $p$-subgroup of $G$. Then $G
= { }_SG_S$ is an $S$-$S$ biset with left and right action given by
multiplication in $G$. The transitive subbisets of $G$ are the $S$-$S$ double
cosets of $G$. For $g \in G$, the stabilizer in $S \times S$ of $g$ is the
graph subgroup
\[
\Delta^{c_g}_{\c{g}{S} \cap S} = \setst{(s, s^g)}{s \in \c{g}{S} \cap S}
\]
of $S \times S$. Thus, the double coset 
\[
SgS \,\, \cong \,\, (S \times S)/\Delta^{c_g}_{\c{g}{S} \cap S}
\]
as a right $(S \times S)$-set. In general, for $P \leq S$ and an injective
group homomorphism $\phi\colon P \to S$, we write $\Delta_{P}^\phi$ for the
graph subgroup $\setst{(t, t^\phi)}{t \in P}$. 

Let $\psi\colon S \to A$ be a map to an abelian group $A$, and $T$ the kernel of
$\psi$. The transfer map $\tr_{S, \psi}^G\colon G \to A$ relative to $\psi$ is
the group homomorphism given by 
\[
\text{$u \tr_{S, \psi}^G = \prod_{h \in [G/S]} ([uh]^{-1}uh)\psi$ \quad\quad
for $u \in G$,} 
\] 
where $[G/S]$ is a set of representatives for the left cosets of $S$ in $G$ and
$[uh] \in [G/S]$ is the chosen representative for the coset $uhS$.  Restricting
$\tr_{S, \psi}^G$ to $S$ and decomposing this product by the left action of $S$
on the set of cosets $G/S$ gives rise to the Mackey decomposition (cf.
\cite[Lemma~15.13]{GLS2}) of the transfer map:
\[
\text{$u \tr_{S, \psi}^G = \prod_{g \in [S\bs G/S]} u \tr_{\c{g}{S} \cap S,\,
c_g\psi}^S$ \quad\quad for $u \in S$},  
\]
where $[S\bs G/S]$ is a set of representatives for the $S$-$S$ double cosets in
$G$ and $\tr_{\c{g}{S} \cap S, \,c_g\psi}^S$ is the transfer of the composite 
\[
\c{g}{S} \cap S \xrightarrow{c_g} S \cap S^g \xrightarrow{\psi|_{S \cap S^g}} (S
\cap S^g)T/T. 
\]
Thus, $\tr_{S,\, \psi}^G\!|_S$ is determined by the collection of morphisms
$\{c_g\colon \c{g}{S} \cap S \to S \cap S^g \mid g \in [S\bs G/S]\}$ in
$\F_S(G)$.

Let $P \leq S$.  For an $S$-$S$ biset $X$, denote by ${ }_SX_P$ the set $X$
considered as an $S$-$P$ biset upon restriction on the right to $P$. More
generally, for an injective group homomorphism $\phi\colon P \to S$, denote by
${ }_SX_\phi$ the $S$-$P$ biset with action $s\cdot x \cdot t = sxt^\phi$ for
$x \in X$, $s \in S$, and $t \in P$.  In the case of $X = { }_SG_S$ and $x \in
G$ with $\phi = c_x$, the map $g \mapsto gx$ gives an isomorphism of $S$-$P$
bisets ${ }_SG_P \cong { }_SG_{c_x}$.  This property determines from fusion
data the information that
\[
u \tr_{S, \psi}^G = (x^{-1}ux)\tr_{S, \psi}^G \quad\quad \text{for $[u,x] \in S$},
\]
i.e. that $S \cap [G,G] \leq \ker(\tr_{S,\psi}^G|_S)$. So $\tr_{S, \psi}^G$
induces a group homomorphism $S/\foc(\F_S(G)) \to A$. 

\begin{definition}\label{D:charbiset}
Let $\F$ be a saturated fusion system on the $p$-group $S$. An $S$-$S$ biset
$\Omega$ is said to be a \emph{characteristic biset} for $\F$ if
\begin{enumerate}
\item[(a)] for each transitive subbiset of $\Omega$ isomorphic to $(S \times
S)/\Delta^{\phi}_P$ as a right $(S \times S)$-set for some $\phi \in
\Hom_\F(P,S)$, 
\item[(b)] for each $P \leq S$ and each $\phi \in \Hom_\F(P,S)$,
the $S$-$P$ bisets ${ }_S\Omega_P$ and ${ }_S\Omega_\phi$ are isomorphic, and
\item[(c)] $|\Omega|/|S|$ is prime to $p$.
\end{enumerate}
\end{definition}

\begin{theorem}[{\cite[Proposition~5.5]{BrotoLeviOliver2003}}]
Each saturated fusion system has a characteristic biset.
\label{T:charbiset}
\end{theorem}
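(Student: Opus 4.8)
The plan is to verify the three axioms by passing to the double Burnside ring and phrasing everything through fixed-point (mark) functions. An $S$-$S$ biset is the same datum as a right $(S\times S)$-set, and is determined up to isomorphism by its marks $\Delta \mapsto |\Omega^{\Delta}|$ as $\Delta$ ranges over the subgroups of $S\times S$ up to conjugacy; a genuine (as opposed to virtual) biset is exactly a nonnegative integer combination of the transitive orbits $(S\times S)/\Delta$. In this language axiom (a) says we work inside the sub-monoid generated by the graph orbits $(S\times S)/\Delta^\phi_P$ with $\phi\in\Hom_\F(P,S)$, and axiom (c) is the single congruence $|\Omega|/|S|\not\equiv 0\pmod p$. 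Since $|(S\times S)/\Delta^\phi_P|=|S|^2/|P|$, writing $\Omega=\bigsqcup_i c_i\,(S\times S)/\Delta^{\phi_i}_{P_i}$ gives $|\Omega|/|S|=\sum_i c_i\,|S|/|P_i|\equiv \sum_{P_i=S} c_i \pmod p$, because every term with $P_i<S$ contributes a multiple of $p$. Thus (c) reduces to arranging that the total multiplicity of the top orbits (those with $P_i=S$) is prime to $p$, and the real content is axiom (b).

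Next I would translate (b) into marks. For $\iota\colon P\hookrightarrow S$ the inclusion, the bisets ${ }_S\Omega_P$ and ${ }_S\Omega_\phi$ are the restrictions of the $(S\times S)$-set $\Omega$ along the injections $\id\times\iota$ and $\id\times\phi$ of $S\times P$ into $S\times S$; hence at a subgroup $H\leq S\times P$ their marks are $|\Omega^{(\id\times\iota)(H)}|$ and $|\Omega^{(\id\times\phi)(H)}|$. Consequently (b) is equivalent to the assertion that the mark function $\Delta^\psi_R\mapsto|\Omega^{\Delta^\psi_R}|$ is constant on the orbits obtained by post-composing $\psi$ with morphisms of $\F$; that is, the marks of $\Omega$ at graph subgroups depend only on the $\F$-conjugacy class of the pair. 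By Alperin's fusion theorem it suffices to impose this invariance only for $\psi\in\Aut_\F(Q)$ with $Q$ fully normalized and either $\F$-essential or equal to $S$, which cuts (b) down to finitely many linear equations among the $c_i$.

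It then remains to produce a nonnegative integer solution of these equations whose top multiplicity is prime to $p$. I would do this by downward induction on $|S/Q|$: one starts from the orbit $(S\times S)/\Delta_S$ with multiplicity $1$, and at each fully normalized $Q$ adds graph orbits indexed by the $\Aut_\F(Q)$-orbits of $\F$-morphisms so as to equalize the marks demanded by (b) at level $Q$ without disturbing the marks at larger subgroups already fixed. The saturation axioms enter decisively here: the Sylow axiom, $\Aut_S(Q)\in\Syl_p(\Aut_\F(Q))$ for fully normalized $Q$, together with the extension axiom, is exactly what guarantees that the required multiplicities can be chosen to be integers prime to $p$, so that the induction both closes and preserves the $p'$-congruence needed for (c). The main obstacle is precisely this inductive step: solving the mark equations integrally and prime-to-$p$ at each $Q$ amounts to an orbit-counting identity inside the Burnside ring of $\Aut_\F(Q)$, and it is the saturation hypothesis, via $[\Aut_\F(Q):\Aut_S(Q)]$ being prime to $p$, that makes these local counts balance. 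Once the marks are $\F$-invariant on the generating automorphisms, Alperin's theorem propagates (b) to all $\phi\in\Hom_\F(P,S)$, while (a) and (c) hold by construction.
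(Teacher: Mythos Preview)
The paper does not supply a proof of this statement; it is quoted from \cite[Proposition~5.5]{BrotoLeviOliver2003} and invoked as a black box, so there is no argument in the paper against which to compare yours.

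Regarding your sketch on its own terms: the translation of axioms (a)--(c) into conditions on marks of the $(S\times S)$-set $\Omega$ is correct and standard, as is the observation that only graph subgroups carry nonzero marks once (a) is imposed, and that (b) then becomes $\F$-invariance of the mark function under post-composition. The substantive gap is the inductive step. You assert that at each level $Q$ one can ``add graph orbits \dots\ so as to equalize the marks'' with nonnegative integer multiplicities, and that saturation ``is exactly what guarantees'' this, but you do not carry out the computation. Two points are unaddressed. First, the marks at $\Delta^\psi_Q$ contributed by the orbits already chosen at strictly larger levels need not be constant in $\psi$, and you give no reason why the discrepancy can be repaired by \emph{adding} orbits at level $Q$ rather than subtracting; nonnegativity of the final coefficients is nowhere argued. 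Second, the sentence claiming that the Sylow and extension axioms make the required multiplicities integral and prime to $p$ is an assertion, not an argument: turning this into an honest orbit count in the Burnside ring of $\Aut_\F(Q)$, and then assembling the local solutions compatibly across all $Q$, is precisely the content of the Broto--Levi--Oliver proof (which in fact first produces a virtual $\F$-stable element over $\mathbb{Z}_{(p)}$ and only afterwards arranges integrality and positivity). Your outline identifies the right framework but stops before the step in which saturation does any real work.
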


When $\F = \F_S(G)$ for a finite group $G$, the above discussion indicates one
may take $\Omega = G$. A characteristic biset is not uniquely determined by the
fusion system; for instance, a disjoint union of a $p'$ number of copies of
$\Omega$ also satisfies the Linckelmann-Webb properties (a)--(c) if $\Omega$ does. But
for what follows, any characteristic biset will do.

Fix a saturated fusion system $\F$ over the $p$-group $S$, and let $\Omega =
\Omega_\F$ be a characteristic biset for $\F$.  From
\ref{D:charbiset}(a), fix a decomposition of $\Omega$ into
transitive $S$-$S$ bisets:
\[
\Omega = \coprod_{i \in I} (S \times S)/\Delta_{S_i}^{\phi_i}. 
\]
where $S_i \leq S$ and $\phi_i \in \Hom_\F(S_i, S)$ for each $i \in I$.  For a
an abelian group $A$ and a group homomorphism $\psi\colon S \to A$, define the
\emph{transfer map relative to} $\Omega$ to be the homomorphism $\tr_{\Omega,
\psi}: S \to A$ given by
\[
u \tr_{\Omega, \psi} = \prod_{i \in I} u \tr_{S_i, \phi_i\psi}^S.
\]
for $u \in S$, where $\tr_{S_i, \phi_i\psi}^S$ is the ordinary transfer. From
\ref{D:charbiset}(b), 
\begin{align}
\ker(\tr_{\Omega, \psi}) \geq \foc(\F). 
\label{E:LW(b)}
\end{align}

\section{Proof of Theorem~{\rm TL}}
\begin{proof}[of Theorem~{\rm TL}]
Let $\psi\colon S \to S/T$ be the quotient map. Suppose that $u$ has no fully
$\F$-centralized $\F$-conjugate in $T$. We shall show that $u$ lies outside the
kernel of the transfer map $\tr_{\Omega, \psi}$. Once this is done, the Theorem follows
from \eqref{E:LW(b)}.

Without loss of generality we may assume that $u$ itself is fully
$\F$-centralized.  Let $P = C_S(u)$. Applying the Mackey formula for, and the
definition of, ordinary transfer, we have

\begin{align*}
u \tr_{\Omega, \psi} &= \prod_{i \in I} u \tr_{S_i, \phi_i\psi}^S \\ 
&= \prod_{i \in I} \prod_{t \in [P\backslash S/S_i]} u \tr_{\c{t}{S}_i \cap
P,\, c_t\phi_i\psi}^P \\
&= \prod_{i \in I} \prod_{t \in [P \backslash S/S_i]} \prod_{r \in [P/\c{t}{S}_i
\cap P]} (([ur]^{-1}ur)^{c_t\phi_i})\psi
\end{align*}
where $[ur]$ is the representative in $[P/\c{t}{S}_i \cap P]$ corresponding to
the coset $ur(S_i^t \cap P)$.  As $P$ commutes with $u$, we have for an orbit
$\mathcal{O}$ of $\gen{u}$ on the cosets $P/\c{t}{S}_i \cap P$, that $\Pi_{r
\in [\mathcal{O}]} [ur]^{-1}ur = u^{|\mathcal{O}|} \mod T$.  Taking the product
over these orbits, 
\[
u \tr_{\Omega, \psi} = \prod_{i \in I}\prod_{t \in [P \backslash S/S_i]}
(u^{c_t\phi_i})^{|P:\c{t}{S}_i \cap P|} \quad \mod T.
\]
Suppose $i$ and $t$ are such that the index $|P:\c{t}{S}_i \cap P|$ is
divisible by $p$. Then the corresponding factor $(u^{c_t\phi_i})^{|P:S_i^t \cap
P|}$ has order less than that of $u$, so lies in $T$ by assumption and
contributes nothing to the transfer. On the other hand, $|P:\c{t}{S}_i \cap P|
= 1$ if and only if $P \leq \c{t}{S}_i$. In this case, $\varphi_i$ is defined
on $P^t = C_S(u^t)$, and so the corresponding factor $u^{c_t\phi_i}$ is fully
$\F$-centralized. By assumption, $u^{c_t\phi_i} \nin T$. Write $\mathcal IT =
\{u_jT\}_{j \in J}$ with $u_j$ a fully $\F$-centralized $\F$-conjugate of $u$
for each $j \in J$. Let
\[
\mathcal{T} = \{(i, t) \mid i \in I, \,t \in [P\backslash S/S_i], P \leq \c{t}{S}_i\}.
\]
Then by the above remarks,
\[
u \tr_{\Omega, \psi} = \prod_{j \in J} u_j^{k_j}T
\]
with $\sum_{j \in J} k_j = |\mathcal T|$.

We finish by showing that the cardinality of the set $\mathcal T$ is prime
to $p$. Now $P \leq \c{t}{S}_i$ if and only if $P$ fixes the left coset $tS_i$
in its action from the left. Furthermore, $\Omega$ decomposes as a disjoint
union of orbits of the form $S/S_i$ as a right $S$-set.  Therefore $|\mathcal
T| = |(\Omega/S)^P|$, the number of $P$-fixed points in its left action on this
set of orbits. Since $|\Omega/S|$ is prime to $p$ by
\ref{D:charbiset}(c), it follows that $|\mathcal T|$ is also prime
to $p$. As $\sum_{j \in J} k_j = |\mathcal T|$, there exists $j_0 \in J$
with $p \nmid k_{j_0}$. By linear independence of $\{u_jT\}_{j \in J}$, we have
$u \tr_{\Omega, \psi} \neq T$ and so $u \nin \ker(\tr_{\Omega, \psi})$,
completing the proof.  
\end{proof}

When $\F = \F_S(G)$ for some finite group $G$ with Sylow $p$-subgroup $S$, we
can specialize Theorem~TL to obtain a generalization of Lyons' results
\cite[Proposition~15.15]{GLS2} and \cite[Lemma~2.3.1]{GLS6}. 

\begin{corollary}[({cf. \cite[Proposition~15.15]{GLS2}})]\label{C:TTcyclic}
Let $\F$ be a saturated fusion system on a $p$-group $S$ with $\F = O^p(\F)$.
Suppose $T$ is a proper normal subgroup of $S$ with $S/T$ cyclic, and let $u$
be an element of least order in $S - T$.  Assume that every fully
$\F$-centralized $\F$-conjugate of $u$ lies in the coset $T$ or in $uT$. Then
$u$ has a fully $\F$-centralized $\F$-conjugate in $T$.
\end{corollary}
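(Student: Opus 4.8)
The plan is to read Corollary~\ref{C:TTcyclic} off Theorem~TL by checking that the hypotheses of the corollary supply conditions (1)--(3) of that theorem. The group $S/T$ is cyclic, hence abelian, so Theorem~TL is applicable, and conditions (1) and (3) of Theorem~TL are assumed verbatim in the corollary. Thus the only thing to verify is condition~(2): that the set of cosets $\mathcal{I}T$ is linearly independent in $\Omega_1(S/T)$, where $\mathcal{I}$ is the set of fully $\F$-centralized $\F$-conjugates of $u$ lying in $S-T$.

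To check this I would first show $uT \in \Omega_1(S/T)$. Since $1 \in T$ we have $u \neq 1$, so $u$ has order $p^m$ for some $m \geq 1$. If $u^p \notin T$, then $u^p$ is an element of $S-T$ of order $p^{m-1}$, strictly smaller than the order of $u$, contrary to hypothesis~(1) of the corollary; hence $u^p \in T$, that is, $uT \in \Omega_1(S/T)$, and $uT \neq T$ because $u \in S-T$. Next, if $v \in \mathcal{I}$ then $v$ is a fully $\F$-centralized $\F$-conjugate of $u$ in $S-T$, so by hypothesis $v$ lies in $T$ or in $uT$; as $v \notin T$, we get $vT = uT$. Therefore $\mathcal{I}T$ is either $\es$ or $\{uT\}$, so in both cases it is a subset of $\Omega_1(S/T)$ containing at most one nonzero element, and is thus linearly independent. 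Condition~(2) of Theorem~TL holds, and the corollary follows immediately.

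I do not anticipate any real difficulty here: the corollary is just Theorem~TL specialized to the situation in which $S/T$ is cyclic, so that $\Omega_1(S/T)$ is cyclic of order $p$ and the linear independence of $\mathcal{I}T$ reduces to the statement that all fully $\F$-centralized $\F$-conjugates of $u$ in $S-T$ occupy a single coset of $T$. The one place a short argument is needed is the reduction showing that minimality of the order of $u$ already forces $uT$ into $\Omega_1(S/T)$, which is what makes the single-coset hypothesis of the corollary match condition~(2).
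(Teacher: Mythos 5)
Your proof is correct and follows exactly the route the paper intends: the corollary is recorded as an immediate specialization of Theorem~TL, and the only content is your verification of condition~(2), namely that minimality of the order of $u$ forces $u^p \in T$ so that $\mathcal{I}T \subseteq \{uT\} \subseteq \Omega_1(S/T)$ is linearly independent. Nothing is missing.
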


Note that if $p=2$ in Corollary~\ref{C:TTcyclic}, then $uT$ is the unique
involution in the quotient $S/T$, and so the condition that each $\F$-conjugate
of $u$ lies in $T \cup uT$ is automatically satsified. 

\begin{corollary}[({cf. \cite[Lemma~2.3.1]{GLS6}})]\label{C:TTlinind}
Let $\F$ be a saturated fusion system on a finite $2$-group $S$ with
$\F=O^2(\F)$. Suppose $T$ is a proper normal subgroup of $S$ with $S/T$
abelian. Let $\mathcal I$ be the set of fully $\F$-centralized involutions in
$S - T$, and suppose that the set $\mathcal IT = \{vT \mid v \in \mathcal I\}$
is linearly independent in $\Omega_1(S/T)$.  Then each involution $u \in S-T$
has a fully $\F$-centralized $\F$-conjugate in $T$. 
\end{corollary}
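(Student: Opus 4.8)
The plan is to obtain Corollary~\ref{C:TTlinind} as an immediate consequence of Theorem~TL, applied with the same saturated fusion system $\F$ and the same normal subgroup $T$, taking for the distinguished element ``$u$'' of the theorem an arbitrary involution $u \in S-T$. It then suffices to verify the three numbered hypotheses of Theorem~TL for this choice and to account for the two uses of the symbol $\mathcal I$.

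First I would check hypothesis (1): since $T$ is a subgroup it contains the identity, so every element of $S-T$ is nontrivial and hence has order at least $2$; as $u$ has order exactly $2$ it is of least order in $S-T$. Hypothesis (3), namely $\F = O^2(\F)$, is part of the hypotheses of the corollary.

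The substantive point is hypothesis (2). Let $\mathcal I_u$ denote the set of fully $\F$-centralized $\F$-conjugates of $u$ lying in $S-T$ --- this is the set denoted $\mathcal I$ in the statement of Theorem~TL --- and keep $\mathcal I$ for the (generally larger) set of all fully $\F$-centralized involutions in $S-T$ appearing in the corollary. Morphisms in $\F$ are injective group homomorphisms and therefore preserve element orders, so every $\F$-conjugate of the involution $u$ is again an involution; consequently $\mathcal I_u \subseteq \mathcal I$. Because $S/T$ is abelian, $\Omega_1(S/T)$ is an elementary abelian $2$-group, i.e.\ an $\mathbb F_2$-vector space, and each coset $vT$ with $v \in \mathcal I$ is a nonzero element of it (nonzero since $v \notin T$). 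A subset of a linearly independent set is linearly independent, so from $\mathcal I_u T \subseteq \mathcal I T$ and the hypothesis that $\mathcal I T$ is linearly independent we conclude that $\mathcal I_u T$ is linearly independent; this is hypothesis (2).

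With all three hypotheses verified, Theorem~TL provides a fully $\F$-centralized $\F$-conjugate of $u$ in $T$, which is the desired conclusion; as $u$ was an arbitrary involution of $S-T$, this proves the corollary. I do not anticipate any real obstacle here: the only care needed is in bookkeeping the two meanings of $\mathcal I$ and in recording the elementary facts that involutions are of least order among nontrivial elements and that fusion isomorphisms preserve order.
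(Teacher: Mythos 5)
Your proposal is correct and is exactly the intended deduction: the corollary is the specialization of Theorem~TL to an involution $u$, and the paper records it without spelling out the (routine) verification you give. The key observations --- that an involution has least order among the nontrivial elements of $S-T$, and that the fully $\F$-centralized $\F$-conjugates of $u$ in $S-T$ form a subset of $\mathcal I$, so their cosets inherit linear independence in $\Omega_1(S/T)$ --- are precisely what is needed.
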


\section{Up and down fusion}

We close by pointing out a strengthening of Alperin's fusion theorem for
saturated fusion systems which may be useful when applying Theorem~TL in the
case where one has some knowledge of the structure of $\F$-automorphism groups
of subgroups of $S$ in a given conjugation family $\C$ for $\F$.  This is
motivated by Alperin's observations in \cite{Alperin1974}, and the proof is
his.  

In practice, one applies Theorem~TL in a fusion system with $\F = O^p(\F)$ to
obtain conjugacy information (usually of involutions). The result of applying
the transfer lemma is to get a morphism $\phi\colon C_S(u) \to C_S(u^\phi)$
with $u^\phi \in T$. Sometimes it is advantageous to decompose this morphism
into a composition of restrictions of automorphism groups of subgroups of $S$.
In particular, by Alperin's fusion theorem there will be some conjugate $u_1$ of
$u$, an $\F$-centric subgroup $P$ of $S$, and $\alpha \in \Aut_\F(P)$ such that
$u_1 \in P$, $u_1 \nin T$, and $u_1^\alpha \in T$. This increased control comes
at the cost that $u_1^\alpha$ may no longer be fully $\F$-centralized.
However, Proposition \ref{P:updown} below shows that a decomposition may be
chosen so that the centralizers of intermediate conjugates always ``go up''.
In particular, the above $\alpha$ can always be chosen so that it extends to
$\alpha\colon C_S(u_1) \to C_S(u_1^\alpha)$.

A conjugation family $\C$ for $\F$ is a set of subgroups $Q$ of $S$ such
that every morphism in $\F$ is a composition of restrictions of elements in
$\Aut_\F(Q)$ as $Q$ ranges over $\C$. Examples are the set of fully
$\F$-normalized, $\F$-centric, and $\F$-radical subgroups of $S$, or the set
$\{P \leq S \mid \text{$P$ is $\F$-essential or $P = S$} \}$.  Given an
isomorphism $\varphi \in \Hom_\F(P,P')$, say for short that a sequence $(Q_i,
\alpha_i)_{1\leq i \leq n}$ is an \emph{up} (resp. \emph{down})
$\C$-\emph{decomposition} of $\varphi$ if $Q_i \in \C$ with $\alpha_i \in
\Aut_\F(Q_i)$ and $\phi = \alpha_1\cdots\alpha_n$ (after restriction of the
$\alpha_i$'s) and with $|C_S(P_{i-1})| \leq |C_S(P_i)|$ (resp. $|C_S(P_{i-1})|
\geq |C_S(P_i)|$) for all $1 \leq i \leq n$. (Here, we set $P_0 = P$ and $P_i =
P^{\alpha_1\cdots\alpha_i}$.)

\begin{proposition}\label{P:updown}
Let $\F$ be a saturated fusion system on the $p$-group $S$, and suppose that
$\C$ is a conjugation family for $\F$. Let $P$, $P' \leq S$, let $\phi\colon P
\to P'$ be an isomorphism in $\F$. Then there exists a $\C$-decomposition
$(Q_i, \alpha_i)_{1 \leq i \leq n}$ of $\phi$ together with an integer $0 \leq
k \leq n$ such that if we set $P_i = P^{\alpha_1\cdots\alpha_i}$, then 
\[
|C_S(P)| \leq |C_S(P_1)| \leq \cdots \leq |C_S(P_k)| \geq \cdots \geq |C_S(P_{n-1})|
\geq |C_S(P')|.
\]
\end{proposition}
\begin{proof}
Let $\phi\colon P \to P'$ be an isomorphism in $\F$.  Is suffices to consider the
case when $P'$ is fully $\F$-centralized.  Indeed, if $\psi\colon P' \to P''$ is a
map in $\F$ with $|C_S(P'')|$ maximal and there are up $\C$-decompositions for
$\psi$ and $\phi\psi$, then there is an up-down decomposition for $\phi =
\phi\psi\psi^{-1}$. 

Assume $P'$ is fully $\F$-centralized. Fix an extension $\tilde{\phi}\colon
PC_S(P) \to P'C_S(P')$ of $\phi$ and denote also by $\tilde{\phi}$ the induced
isomorphism $PC_S(P) \to (PC_S(P))^{\tilde{\phi}}$. Let $(Q_i, \alpha_i)_{1
\leq i \leq n}$ be any $\C$-decomposition of $\tilde{\phi}$. Set $P_0 = P$,
$P_i = P_{i-1}^{\alpha_i}$, and $C_0 = C_S(P)$, $C_i = C_{i-1}^{\alpha_i}$, for
$1 \leq i \leq n$. The proof is by induction on the index of $C_S(P)$ in $S$. 

Assume first that $|C_S(P)| = |C_S(P')|$. 
We claim that 
\begin{equation*}
C_i = C_S(P_i) \mbox{ for each $i$ }
\end{equation*}
in this case, so that $(Q_i, \alpha_i)$ is already an up $\C$-decomposition of
$\phi$. The case $i = 0$ holds by definition. If $i \leq n$ and 
$C_{i-1} = C_S(P_{i-1})$, then as $Q_{i} \geq P_{i-1}C_S(P_{i-1})$, we have
\begin{equation*}
C_{i} = C_S(P_{i-1})^{\alpha_{i}} \leq C_S(P_{i}).
\end{equation*}

By induction then
\[
|C_S(P)| = |C_{i-1}| \leq |C_S(P_{i})| \leq |C_S(P')| = |C_S(P)|.
\]
Therefore inequalities are equalities, and $C_{i} = C_S(P_{i})$ as claimed.

Now suppose $|C_S(P)| < |C_S(P')|$ and the proposition holds for all
$\F$-conjugates $R$ of $P'$ with $|S:C_S(R)| < |S:C_S(P)|$. With notation as
before, let $1 \leq l \leq n$ be the smallest integer such that $|C_S(P_{l-1})|
< |C_S(P_l)|$. Then $|C_S(P)| = |C_S(P_1)| = \cdots = |C_S(P_{l-1}|$, since
$C_{S}(P_{i-1})^{\alpha_i} \leq C_S(P_i)$ for each $i$.  By the inductive
hypothesis, there is an up $\C$-decomposition $(Q'_{i}, \alpha'_{i})_{l \leq i
\leq n}$ for $\alpha_{l}\cdots\alpha_{n}: P_l \to P'$, and this completes the
proof of the proposition.  \end{proof}

\begin{remark*}
Proposition~\ref{P:updown} does not hold if centralizers are replaced by the
normalizers in $S$ of the $p$-subgroups. The point is that there need not exist
a decomposition $(Q_i, \alpha_i)_{1 \leq i \leq n}$ of an automorphism $\phi
\in \Aut(P)$ with the property $P_i = P^{\alpha_1\cdots\alpha_i}$ are all fully
$\F$-normalized for $1 \leq i \leq n$. 

As an example, let $H_i \cong S_4$ for $i = 1, 2$ and set $G = H_1 \times H_2$.
Let $S_i \in \Syl_2(H_i)$, $S = S_1 \times S_2$, and $V = O_2(G)$. The
essential subgroups of $\F_S(G)$ are $VS_1$ and $VS_2$. Let $h_i \in
N_{H_i}(VS_{3-i})$ of order $3$, and let $P$ be the four subgroup of $V$ which
is normalized by $h := h_1h_2$ and with $N_S(P)$ of index $2$ in $S$. Then
$|P^{\F}| = 3$ and $N_S(R) = V$ for $R \in P^\F-\{P\}$. Thus, $(VS_{3-i},
c_{h_i})_{1 \leq i \leq 2}$ is the only essential decomposition of $\phi = c_h
\in \Aut_{\F_S(G)}(P)$ and whereas $P$ is fully $\F$-normalized, $P^{h_1}$ is
not.
\end{remark*}

\subsection*{Acknowledgments}
This work forms part of the author's Ph.D. thesis at Ohio State under the
direction of Ron Solomon, and was supported in part by a fellowship from the
Ohio State University Graduate School. The author would also like to thank
Richard Lyons for his comments and suggestions.

\bibliographystyle{plain}{}
\bibliography{/math/home/fac/jl1474/math/research/mybib}

\affiliationone{Justin Lynd\\
                Department of Mathematics\\
                Rutgers University\\
                110 Frelinghuysen Rd\\
                Piscataway, NJ 08854\\
                USA \email{jlynd@math.rutgers.edu}}
                     
\end{document}